\newcommand{\R}{{\mathbb{R}}}
\renewcommand{\S}{{\mathbb{S}}}
\newcommand{\eps}{\varepsilon}
\newcommand{\E}{{\mathbb{E}}}
\newcommand{\F}{{\mathcal{F}}}
\newcommand{\B}{{\mathbb{B}}}
\newcommand{\Sp}{{\rm Sp\,}}
\renewcommand{\phi}{{{\varphi}}}
\newtheorem{theorem}{Theorem}
\newtheorem{lemma}[theorem]{Lemma}
\newtheorem{proposition}[theorem]{Proposition}
\newtheorem{corollary}[theorem]{Corollary}
\theoremstyle{remark}
\newtheorem{remark}[theorem]{Remark}
\numberwithin{equation}{section}
\title{On a Gallai-type problem and illumination of spiky balls and cap bodies}
\author{Andrii Arman}
\address{Department of Mathematics, University of Manitoba, Winnipeg, MB, R3T 2N2, Canada}
\email{andrew0arman@gmail.com}
\thanks{The first author was supported in part by the Pacific Institute for the
	Mathematical Sciences Postdoctoral Fellowship.}
\author{Andriy Bondarenko}
\address{Department of Mathematical Sciences, Norwegian University of Science and Technology, NO-7491 Trondheim, Norway}
\email{andriybond@gmail.com}
\thanks{The second author was supported by grant 334466 of the Research Council of Norway ``Fourier Methods and Multiplicative Analysis''.}
\author{Andriy Prymak}
\address{Department of Mathematics, University of Manitoba, Winnipeg, MB, R3T 2N2, Canada}
\email{prymak@gmail.com}
\thanks{The third author was supported by NSERC of Canada Discovery Grant RGPIN-2020-05357.}
\author{Danylo Radchenko}
\address{Laboratoire Paul Painleve, University of Lille, F-59655 Villeneuve d'Ascq, France}
\email{danradchenko@gmail.com}
\thanks{The fourth author was supported by ERC Starting Grant No. 101078782.}
\keywords{Gallai-type problem, piercing number, illumination problem, spiky balls, cap bodies}
\subjclass[2020]{Primary 52A20; Secondary 52A35, 52A37, 52A40, 52C17, 05D15}
\begin{document}

\begin{abstract}
	We show that any finite family of pairwise intersecting balls in $\E^n$ can be pierced by $(\sqrt{3/2}+o(1))^n$ points improving the previously known estimate of $(2+o(1))^n$. As a corollary, this implies that any $2$-illuminable spiky ball in $\E^n$ can be illuminated by $(\sqrt{3/2}+o(1))^n$ directions. For the illumination number of convex spiky balls, i.e., cap bodies, we show an upper bound in terms of the sizes of certain related spherical codes and coverings. For large dimensions, this results in an upper bound of $1.19851^n$, which can be compared with the previous $(\sqrt{2}+o(1))^n$ established only for the centrally symmetric cap bodies. We also prove the lower bounds of $(\tfrac{2}{\sqrt{3}}-o(1))^n$ for the three problems above.
\end{abstract}

\maketitle

\section{Introduction and main results}
 Let $\mathcal{F}$ be a collection of sets, we say that a set of points $P$ pierces $\mathcal{F}$ (or that $P$ is a piercing set for $\F$) if for any $F\in \mathcal{F}$ we have $P\cap F\neq \emptyset$. Finding the smallest piercing set for a given collection $\mathcal{F}$ is a hard problem, even when $\mathcal{F}$ satisfies some additional intersection properties. We refer an interested reader to the survey~\cite{Ec} about the so called Hadwiger-Debrunner $(p,q)$-type questions, and for most of this paper we concentrate on the case when $\mathcal{F}$ is a collection of pairwise intersecting balls in $\E^{n}$. 
 
 Gallai raised a question of finding the minimal cardinality of a piercing set for arbitrary finite family of pairwise intersecting planar discs. This planar problem was solved by Danzer~\cite{Da86}, who showed that for any such collection of discs, there is a piercing set of cardinality at most $4$, and that $4$ is the best possible. See also~\cite{Grun} for some related results in the plane.
 
 More generally, define $G_n$ as the smallest integer such that for any finite family $\F$ of pairwise intersecting closed balls in $\E^n$ there exists a piercing set $P$ of cardinality $G_n$. The best known asymptotic upper bound on $G_n$ is $G_n\le (2+o(1))^n$, see~\cite{Ec}*{(4.4)}. This bound was obtained through Danzer's reduction to the spherical coverings by caps of angular radius $\pi/6$, and subsequent use of Rogers's estimate on the number of such caps needed to cover the entire sphere. 
 
In this paper we obtain new bounds on $G_n$. The main idea is to reduce the problem to the situation when the balls have almost equal radii and subsequently invoke the result of Bourgain and Lindenstrauss~\cite{Bo-Li} on covering a set by balls of the same diameter. Consequently, we obtain a much better exponent.
	\begin{theorem}
		\label{thm:gal-upper}
		$G_n\le (\sqrt{3/2}+o(1))^n$, i.e., any finite family of pairwise intersecting closed balls in $\E^n$ can be pierced by $(\sqrt{3/2}+o(1))^n$ points.
	\end{theorem}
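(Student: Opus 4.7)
The plan is to reduce the problem to a family of balls with nearly equal radii, where the Bourgain-Lindenstrauss covering theorem applies. After rescaling, assume that the smallest ball in $\F$ is $B_0 = B(\zero,1)$; then every $B_i = B(c_i,r_i) \in \F$ has $r_i \ge 1$ and, by the intersection with $B_0$, $|c_i| \le r_i + 1$. If $|c_i| \le r_i$ then $\zero \in B_i$ and the origin already pierces $B_i$, so it suffices to handle balls with $t_i := |c_i| - r_i \in (0,1]$. I would further split these into \emph{large} balls ($r_i \ge M$) and \emph{small} balls ($r_i < M$) using a threshold $M = M(n)$ to be chosen at the end.

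For a large ball $B_i$, the set of points on the sphere $\partial B(\zero, 2)$ that pierce $B_i$ is a spherical cap around $c_i/|c_i|$; a direct computation using $t_i \le 1$ and $r_i \ge M$ shows that its angular radius is at least $\pi/3 - O(1/M)$. Choosing $M \to \infty$ with $n$, Rogers' classical bound for covering the sphere $\S^{n-1}$ by spherical caps yields a set of at most $(2/\sqrt{3}+o(1))^n$ points on $\partial B(\zero, 2)$ that together pierce every large ball; since $2/\sqrt{3} < \sqrt{3/2}$, this contribution is already within the target rate.

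For the small balls I would partition them into scale classes $\F_k := \{B_i \in \F : r_i \in [(1+\varepsilon)^k, (1+\varepsilon)^{k+1})\}$ with $k = 0, \ldots, K-1$ and $K = O(\log M / \varepsilon)$, for a parameter $\varepsilon = \varepsilon(n) \to 0$. Within a single class the pairwise-intersection property forces the centers into a set of diameter at most $2(1+\varepsilon)^{k+1}$, while every radius is at least $(1+\varepsilon)^k$; piercing $\F_k$ thus reduces to covering the set of centers by balls of radius $(1+\varepsilon)^k$, i.e., by balls whose diameter is a factor $1/(1+\varepsilon)$ smaller than the diameter of the set. This is precisely the setting of the Bourgain-Lindenstrauss theorem and gives $(\sqrt{3/2}+o_\varepsilon(1))^n$ piercing points per class. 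Assembling the origin, the large-ball net and the $K$ scale-class covers produces a piercing set of size at most $1 + (2/\sqrt{3}+o(1))^n + K \cdot (\sqrt{3/2}+o_\varepsilon(1))^n$; choosing $M$ polynomial in $n$ and $\varepsilon = n^{-1/2}$, the factor $K$ is a polynomial in $n$ and gets absorbed into the base of the exponent, yielding the desired $(\sqrt{3/2}+o(1))^n$. The main technical obstacle is establishing the Bourgain-Lindenstrauss bound in the slightly off-diagonal form---covering a set by balls of strictly smaller diameter with the same exponential rate $\sqrt{3/2}+o_\varepsilon(1)$---whereas the uniform cap estimate for large balls is a routine but necessary technical check.
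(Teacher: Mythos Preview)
Your proposal is correct and follows essentially the same route as the paper: normalize so the smallest ball is the unit ball, handle large balls via a spherical cap cover on $2\S^{n-1}$ (their Lemma~2 is exactly your ``routine technical check''), and handle the remaining balls by partitioning into geometric radius classes and invoking Bourgain--Lindenstrauss on the set of centers in each class. The one point where the paper is more explicit is precisely your ``main technical obstacle'': rather than appealing to an off-diagonal form of Bourgain--Lindenstrauss, they first apply the equal-diameter version to cover the centers by balls of the \emph{larger} radius $\lambda^{k}$, and then use an elementary lemma (any ball of radius $r$ is covered by $2n$ balls of radius $r\sqrt{1-1/n}$, placing centers at $\pm r n^{-1/2} e_j$) to pass to balls of the smaller radius $\lambda^{k-1}$. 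Their choice $\lambda=(1-1/n)^{-1/2}$ (so $\varepsilon\approx 1/(2n)$, not $n^{-1/2}$) is tailored to make this subdivision cost exactly a factor $2n$ per class, with $O(n\log n)$ classes; your $\varepsilon=n^{-1/2}$ also works but makes the bookkeeping less clean.
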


We now proceed with preliminaries needed to discuss applications of Theorem~\ref{thm:gal-upper} to  certain illumination problems.
	
For a compact $K$ in $\E^n$ a point $x$ on the boundary of $K$ is illuminated by a direction $u\in\S^{n-1}$, where $\S^{n-1}$ is the unit sphere in $\E^n$, if the half line originating at $x$ in the direction of $u$ intersects the interior of $K$ at points arbitrarily close to $x$. The illumination number $I(K)$ of $K$ is the smallest number of directions needed to illuminate all boundary points of $K$. A convex body in $\E^n$ is a convex compact set in $\E^n$ with non-empty interior. A major open question in discrete and convex geometry is to determine the largest value of $I(K)$ where $K$ is a convex body in $\E^n$. 
 
 It was conjectured by Hadwiger~\cite{Ha} and Boltianski~\cite{Bo} that this value is $2^n$ with equality attained only for affine copies of the hypercube. An interested reader is referred to the survey~\cite{Be-Kh}, however, several new results have been obtained since then, including the currently best known asymptotic upper bound $\exp(\tfrac{-cn}{\log^8n})4^n$ in~\cite{CHMT} and some results in small dimensions~\cite{Pr}, \cite{Pr-Sh}, \cite{ABP-small-dim}. Verification of the conjecture, or estimates of the illumination number, for particular classes of convex bodies or compact sets are also of great interest. Here we will study illumination of cap bodies and spiky balls continuing research started in a recent work~\cite{Be-cap} of Bezdek, Ivanov and Strachan.
		
Let $\B^n$ denote the unit ball in $\E^n$, $\|\cdot\|$ the Euclidean norm, ${\rm conv}(\cdot)$ the convex hull, $C(x,\alpha)=\{y\in\S^{n-1}:x\cdot y>\cos\alpha\}$ the open spherical cap with center $x\in\S^{n-1}$ of angular radius $\alpha\in(0,\pi)$, and $ C[x,\alpha]=\{y\in\S^{n-1}:x\cdot y\ge\cos\alpha\}$ be the corresponding closed cap. 
 
For any points $x_1,\dots,x_m\in\E^n\setminus\B^n$, we define a spiky ball as
	\[
	\Sp[x_1,\dots,x_m]:=\bigcup_{i=1}^m{\rm conv}\,(\B^n\cup\{x_i\}).
	\]
Without loss of generality, we will assume below that each $x_i$ is a vertex of the spiky ball, i.e., $\Sp[x_1,\dots,x_m]\ne \Sp[x_1,\dots,x_{i-1},x_{i+1},\dots,x_m]$. Each ``spike'' has the base $B(x_i):=\S^{n-1}\cap \Sp[x_i]= C[\tfrac{x_i}{\|x_i\|},\arccos\tfrac{1}{\|x_i\|}]$. We will refer to this cap as the cap associated with $x_i$. 
	
Coverings of the sphere arise naturally in estimates of the illumination numbers of spiky balls. For $0<\theta<\tfrac\pi2$ and $A\subset\S^{n-1}$, the covering number $N(A,n,\theta)$ is the smallest number of closed spherical caps of angular radius $\theta$ required to cover $A$. We set $N(n,\theta):=N(\S^{n-1},n,\theta)$, and it is known (e.g., Rogers~\cite{Ro} or B\"or\"oczky-Wintsche~\cite{Bo-Wi}) that for a fixed $\theta$, as $n\to \infty$ we have 
\begin{equation}\label{eq:BW}N(n, \theta)=\left(\frac{1}{\sin \theta} +o(1)\right)^{n}.\end{equation}
	
In general, a spiky ball may have arbitrarily large illumination number. A natural class of spiky balls was suggested in~\cite{Be-cap} by Bezdek, Ivanov and Strachan. Namely, a spiky ball is 2-illuminable if any two of its vertices can be illuminated by a single direction. With
	\[
	I_{s,n}:=\max\{I(K):K\text{ is $2$-illuminable spiky ball in }\E^n\},
	\]
it was shown in~\cite{Be-cap} that $I_{s,n}\le 3+N(n-1,\pi/6)$ for $n\ge 4$, which implies $I_{s,n}<(2+o(1))^n$. In fact, using a stereographic projection, it was shown in the proof of~\cite{Be-cap}*{Th.~2(iii)} that $I_{s,n}\le 2+G_{n-1}$, and then Danzer's bound $G_{n-1}\le 1+N(n-1,\pi/6)$ was applied. Combining the inequality $I_{s,n}\le 2+G_{n-1}$ with \cref{thm:gal-upper} immediately implies the following improvement.
	\begin{corollary}
		$I_{s,n}\le (\sqrt{3/2}+o(1))^n$.
	\end{corollary}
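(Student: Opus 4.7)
The plan is essentially bookkeeping: the hard work is already packaged into the two ingredients recalled in the excerpt, and the corollary is their immediate combination. Specifically, the authors of~\cite{Be-cap} established, via a stereographic projection argument, the inequality
\[
I_{s,n}\le 2+G_{n-1},
\]
which holds independently of any particular bound on $G_{n-1}$. I would cite this as a black box and then feed in \cref{thm:gal-upper}.

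Concretely, first I would quote the bound $I_{s,n}\le 2+G_{n-1}$ from~\cite{Be-cap}*{Th.~2(iii)}. Second, I would apply \cref{thm:gal-upper} with $n$ replaced by $n-1$ to obtain $G_{n-1}\le (\sqrt{3/2}+o(1))^{n-1}$. Third, I would absorb both the additive constant $2$ and the shift in the exponent from $n-1$ to $n$ into the asymptotic factor: writing $\alpha=\sqrt{3/2}$, for any $\eps>0$ and all sufficiently large $n$ one has $2+\alpha^{n-1}(1+o(1))^{n-1}\le (\alpha+\eps)^n$, since $\alpha^{n-1}=\alpha^{-1}\cdot\alpha^n$ and the constant prefactor $\alpha^{-1}$ (together with the additive $2$) is swallowed by any exponential growth of base $>\alpha$. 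This yields the stated bound $I_{s,n}\le (\sqrt{3/2}+o(1))^n$.

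There is essentially no obstacle: all the content lies in \cref{thm:gal-upper} and in the reduction of~\cite{Be-cap}. The only point deserving a brief remark is that the $o(1)$ in the corollary is interpreted as $n\to\infty$, so one must verify that replacing $n-1$ by $n$ in the exponent and adding a fixed constant do not inflate the asymptotic base above $\sqrt{3/2}$; this is immediate from the computation above. Accordingly, I would present the argument in two lines of display math with no further machinery required.
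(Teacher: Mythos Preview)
Your proposal is correct and matches the paper's argument exactly: the paper also derives the corollary immediately by combining the reduction $I_{s,n}\le 2+G_{n-1}$ from~\cite{Be-cap}*{Th.~2(iii)} with \cref{thm:gal-upper}, absorbing the additive $2$ and the exponent shift into the $o(1)$.
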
 
	
Let us now discuss lower bounds on $I_{s,n}$ and $G_n$. Let $B_n$ be the smallest integer such that any finite point set of diameter $1$ in $\E^n$ can be covered by $B_n$ closed balls of diameter $1$. It is rather immediate to observe that $G_n\ge B_n$, while the inequality $I_{s,n}\ge 1+B_{n-1}$ was established in the proof of~\cite{Be-cap}*{Rem.~5}. We have recently showed~\cite{ABP}*{Th.~2} that $B_n\ge(2/\sqrt{3}-o(1))^n$. Therefore, we have the following lower bounds.
	\begin{corollary}
	    $G_n\ge (\frac{2}{\sqrt{3}}-o(1))^n$ and $I_{s,n}\ge (\frac{2}{\sqrt{3}}-o(1))^n$.
	\end{corollary}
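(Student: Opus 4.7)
The plan is to assemble three facts already recorded in the paragraph immediately preceding the statement; no new idea is required, and indeed the word ``Therefore'' in the excerpt is apt.

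\emph{First}, I would verify the elementary inequality $G_n\ge B_n$. Given a finite set $S\subset\E^n$ of diameter at most $1$ that realizes the value $B_n$, consider the family of closed balls of radius $1/2$ centered at the points of $S$. Any two such balls intersect, since their centers lie at distance at most $1$, so this is a pairwise intersecting family in the sense of the definition of $G_n$. Any piercing set $P$ must hit each of these balls, meaning that for every $s\in S$ there is $p\in P$ with $\|p-s\|\le 1/2$. Equivalently, the closed balls of diameter $1$ centered at the points of $P$ cover $S$. Since by the choice of $S$ no covering of $S$ by fewer than $B_n$ such balls exists, we obtain $|P|\ge B_n$, and hence $G_n\ge B_n$.

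\emph{Second}, I would invoke the two bounds already stated in the preceding paragraph, namely $I_{s,n}\ge 1+B_{n-1}$ from~\cite{Be-cap}*{Rem.~5} and $B_n\ge\bigl(\tfrac{2}{\sqrt{3}}-o(1)\bigr)^n$ from~\cite{ABP}*{Th.~2}. Combining them finishes both estimates: the first step together with the latter bound yields $G_n\ge B_n\ge\bigl(\tfrac{2}{\sqrt{3}}-o(1)\bigr)^n$, and the two quoted bounds together yield $I_{s,n}\ge 1+B_{n-1}\ge\bigl(\tfrac{2}{\sqrt{3}}-o(1)\bigr)^n$, the additive $1$ and the single-exponent shift being absorbed in the $o(1)$ factor at the base.

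The main difficulty here is not in the corollary itself --- all three inputs are either elementary or cited --- but in the proof of $B_n\ge\bigl(\tfrac{2}{\sqrt{3}}-o(1)\bigr)^n$ in~\cite{ABP}*{Th.~2}, which constructs finite diameter-$1$ sets requiring exponentially many unit-diameter balls to be covered. For the corollary as stated there is nothing further to overcome.
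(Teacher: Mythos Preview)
Your proposal is correct and follows exactly the approach the paper takes: the corollary is stated as an immediate consequence of the three facts in the preceding paragraph ($G_n\ge B_n$, $I_{s,n}\ge 1+B_{n-1}$, and $B_n\ge(2/\sqrt{3}-o(1))^n$), and you have simply spelled out the elementary verification of $G_n\ge B_n$ that the paper calls ``rather immediate.'' There is nothing to add.
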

Numerically, $\frac{2}{\sqrt{3}}\approx1.1547$ while $\sqrt{\tfrac{3}{2}}\approx 1.2247$. It may be a challenging problem to find the asymptotic behaviour of both $G_n$ and $I_{s,n}$, or even to obtain an exponential improvement of the presented above lower/upper bounds.
	
%
	
We proceed with illumination of cap bodies. Cap body is a spiky ball which is a convex body. Similarly to $I_{s,n}$, we introduce 
	\begin{align*}
	I_{c,n}&:=\max\{I(K):K\text{ is a cap body in }\E^n\}, \quad\text{and}	\\
	I_{c,n}^{(s)}&:=\max\{I(K):K\text{ is a centrally symmetric cap body in }\E^n\}.
	\end{align*}
Clearly, $I_{c,n}^{(s)}\le I_{c,n}$. The estimate $I_{c,n}^{(s)}\le 2+N(n-1,\pi/4)$, $n\ge 4$, implying the asymptotic $I_{c,n}^{(s)}\le (\sqrt{2}+o(1))^n$, was obtained  in~\cite{Be-cap}.  
	
 In order to obtain new upper bounds on $I_{c,n}$, we use a combination of covering and packing (spherical codes) bounds. For $0<\theta<\pi$, let $M(n,\theta)$ denote the maximal number of non-overlapping open spherical caps of $\S^{n-1}$ with angular radius $\theta/2$ (having $\theta$-separated centers).
	\begin{theorem}\label{thm:cap-bodies-ill}
		For all $\alpha\in (0, \pi/2)$ we have $\displaystyle
		I_{c,n}\le M(n,2\alpha)+N(n,\pi/2-\alpha).$
	\end{theorem}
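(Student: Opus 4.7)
My plan is to split the vertices of $K=\Sp[x_1,\dots,x_m]$ into ``large-cap'' and ``small-cap'' classes and handle them with a packing ingredient and a covering ingredient respectively. Write $c_i=x_i/\|x_i\|$, $\theta_i=\arccos(1/\|x_i\|)\in(0,\pi/2)$, set $L=\{i:\theta_i\ge\alpha\}$ and $S=\{i:\theta_i<\alpha\}$, and fix a covering $\{v_1,\dots,v_N\}$ of $\S^{n-1}$ by closed caps of angular radius $\pi/2-\alpha$, with $N=N(n,\pi/2-\alpha)$. The proposed illumination set is
\[
\mathcal U=\{-c_i:i\in L\}\cup\{-v_j:j=1,\dots,N\},
\]
of size at most $M(n,2\alpha)+N(n,\pi/2-\alpha)$. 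The theorem then follows once I show $|L|\le M(n,2\alpha)$ and that $\mathcal U$ illuminates $\partial K$.

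For the packing bound the key geometric ingredient is that in a cap body the open caps $C(c_i,\theta_i)$ are pairwise disjoint on $\S^{n-1}$. I would prove this by assuming some $y$ satisfies $y\cdot x_i>1$ and $y\cdot x_j>1$ with $i\ne j$ and considering $p_\lambda=(1-\lambda)x_i+\lambda x_j\in K$ for small $\lambda>0$. Since $K$ is a spiky ball, $p_\lambda=(1-s)b+s x_k$ for some $b\in\B^n$, $s\in[0,1]$; a direct expansion rules out $k=i$ (it forces either $b=x_j\notin\B^n$, or $x_j\in\mathrm{conv}(\B^n\cup\{x_i\})$ which contradicts the vertex assumption on $x_j$, or the inequality $y\cdot x_j<1$ that contradicts the choice of $y$), and symmetrically $k=j$ is impossible. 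Thus $p_\lambda$ must lie in a spike through some $x_k$ with $k\ne i$; but for $\lambda$ small enough $p_\lambda$ lies in an open neighborhood of $x_i$, and the vertex assumption provides such a neighborhood disjoint from every spike other than the one through $x_i$, a contradiction. The disjointness then implies that $\{c_i:i\in L\}$ is $2\alpha$-separated and so $|L|\le M(n,2\alpha)$.

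To verify that $\mathcal U$ illuminates every boundary point, let $\Sigma:=\S^{n-1}\cap\partial K$. For $y\in\Sigma$ the outer normal is $y$ itself, and the covering supplies a $v_j$ with $d(v_j,y)\le\pi/2-\alpha$, giving $-v_j\cdot y\le-\sin\alpha<0$; the same argument with $b\in\partial B(x_i)$ in place of $y$ handles a point in the relative interior of a lateral face of spike $i$, whose unique outer normal is $b$. At a vertex $x_i$, the cap-disjointness lemma identifies the outer normal cone with $C[c_i,\theta_i]$, so $u$ illuminates $x_i$ iff $u\cdot\nu<0$ for every $\nu\in C[c_i,\theta_i]$. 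If $i\in L$, the choice $u=-c_i$ works since $c_i\cdot\nu\ge\cos\theta_i>0$ for all $\nu\in C[c_i,\theta_i]$. If $i\in S$, pick $v_j$ with $d(v_j,c_i)\le\pi/2-\alpha$; then for any $\nu\in C[c_i,\theta_i]$ the triangle inequality gives $d(v_j,\nu)\le(\pi/2-\alpha)+\theta_i<\pi/2$, so $-v_j$ illuminates $x_i$. The main obstacle is the cap-disjointness lemma, on which both the packing bound for $|L|$ and the clean description of the normal cone at $x_i$ rest; beyond that, the choice of threshold $\alpha$ is precisely what matches the complementary covering radius $\pi/2-\alpha$, making the triangle-inequality step for $i\in S$ just close and letting the packing radius $2\alpha$ and covering radius $\pi/2-\alpha$ fit together.
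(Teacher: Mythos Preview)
Your proposal is correct and follows the same strategy as the paper: split the vertices by whether $\theta_i\ge\alpha$, illuminate the large-cap vertices by their antipodes $-c_i$ (whose number is bounded by $M(n,2\alpha)$ via the packing of the associated caps), and handle the small-cap vertices with centers of a $(\pi/2-\alpha)$-covering. The only difference is presentational: the paper invokes \cref{prop:illum} as a black box and simply states that cap bodies are characterized by disjoint open caps, whereas you re-derive both ingredients through a direct normal-cone analysis and an explicit convexity argument.
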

      Using the best known asymptotic upper bound on spherical codes due to Kabatjanskii and Levenstein~\cite{KL}, as well as the classical Roger's estimate~\cite{Ro} on spherical covers, we obtain a new upper bound on $I_{c,n}$, and, consequently, an exponential improvement on the upper bound for $I_{c,n}^{(s)}$.
	\begin{theorem}\label{cor:cap-bodies}
	The estimate
	\[
	I_{c,n}< (\tfrac1{\cos\alpha}+o(1))^n
	\]
	holds,
	where $\alpha\approx 0.583808$ is the solution of the equation
	\begin{equation}\label{eqn:alpha}
	\frac1{\cos\alpha}=\left(\frac{1+\sin2\alpha}{2\sin2\alpha}\right)^{\tfrac{1+\sin2\alpha}{2\sin2\alpha}}\left(\frac{2\sin2\alpha}{1-\sin2\alpha}\right)^{\tfrac{1-\sin2\alpha}{2\sin2\alpha}}, \quad 0<\alpha<\frac\pi4.
	\end{equation}
	In particular, $I_{c,n}<1.19851^n$, $n\ge n_0$.
	\end{theorem}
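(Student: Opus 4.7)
The plan is to invoke \cref{thm:cap-bodies-ill} with a parameter $\alpha \in (0, \pi/4)$ to be optimized, and substitute asymptotic upper bounds for each of the two terms on the right-hand side. For the covering number, Rogers' estimate \eqref{eq:BW} applied with $\theta = \pi/2 - \alpha$ gives
$$N(n, \pi/2 - \alpha) \le \bigl(\tfrac{1}{\cos\alpha} + o(1)\bigr)^n.$$
For the packing number, I would quote the Kabatjanskii--Levenstein upper bound on spherical codes of angular separation $2\alpha$, which (in the regime where $2\alpha$ lies above the KL threshold of roughly $63^\circ$) yields $M(n, 2\alpha) \le (A(\alpha) + o(1))^n$, where $A(\alpha)$ denotes the right-hand side of \eqref{eqn:alpha}. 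Thus
$$I_{c,n} \le \bigl(A(\alpha) + o(1)\bigr)^n + \bigl(\tfrac{1}{\cos\alpha} + o(1)\bigr)^n \le \bigl(\max\{A(\alpha),\tfrac{1}{\cos\alpha}\} + o(1)\bigr)^n.$$

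To select the best $\alpha$, I would balance the two exponential rates. On $(0, \pi/4)$ the function $1/\cos\alpha$ is strictly increasing from $1$ to $\sqrt{2}$, whereas a routine monotonicity check (using the substitution $t = \sin 2\alpha \in (0,1)$ and differentiating) shows $A(\alpha)$ is strictly decreasing from $+\infty$ (as $\alpha \downarrow 0$) to $1$ (as $\alpha \uparrow \pi/4$, interpreting the $0^0$ factor as $1$). Hence there exists a unique balancing point $\alpha^\ast \in (0, \pi/4)$ at which $A(\alpha^\ast) = 1/\cos\alpha^\ast$, and this equality is precisely \eqref{eqn:alpha}. At this $\alpha^\ast$ the two contributions are asymptotically equal, so the previous display yields
$$I_{c,n} \le \bigl(\tfrac{1}{\cos\alpha^\ast} + o(1)\bigr)^n,$$
which is the first assertion of the theorem. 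For any other $\alpha \in (0,\pi/4)$, at least one of the two rates strictly exceeds $1/\cos\alpha^\ast$, so the balancing choice is indeed optimal.

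For the numerical claim, I would solve \eqref{eqn:alpha} by a standard bisection or Newton iteration, obtaining $\alpha^\ast \approx 0.583808$ and $1/\cos\alpha^\ast \approx 1.19850\ldots$; since $1/\cos\alpha^\ast < 1.19851$, the $o(1)$ correction can be absorbed for $n \ge n_0$, giving $I_{c,n} < 1.19851^n$. The main technical obstacle is verifying that the KL bound is valid in the form \eqref{eqn:alpha} at the specific angle $2\alpha^\ast \approx 66.9^\circ$ (this sits above the KL threshold, so the claimed rate does apply) and establishing the monotonicity of $A(\alpha)$ that legitimizes the balancing step; once these are in place, the rest is a short optimization together with the numerical computation of $\alpha^\ast$.
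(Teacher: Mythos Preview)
Your proposal is correct and follows essentially the same route as the paper: apply \cref{thm:cap-bodies-ill}, plug in Rogers' bound \eqref{eq:BW} for $N(n,\pi/2-\alpha)$ and the Kabatjanskii--Levenstein bound for $M(n,2\alpha)$, then balance the two exponential rates by exploiting the opposite monotonicities to locate the unique $\alpha$ solving \eqref{eqn:alpha}. Your remark that $2\alpha^\ast\approx 66.9^\circ$ lies above the KL threshold (so the direct KL rate applies without the refinement of \cite{KL}*{Cor.~1}) matches the paper's observation in the remark following the proof.
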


	Both $I_{c,n}$ and $I_{c,n}^{(s)}$ grow exponentially in $n$. Nasz\'{o}di~\cite{Na} showed $I_{c,n}^{(s)}\ge (1.116-o(1))^n$. Using the construction from~\cite{ABP}*{Th.~2}, we improve this bound as follows. 
	\begin{theorem}\label{thm:lower-cap}
		$I_{c,n}^{(s)}\ge (\frac{2}{\sqrt{3}}-o(1))^n$.
	\end{theorem}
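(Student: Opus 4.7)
The plan is to adapt the lower bound construction for $B_n$ from~\cite{ABP}*{Th.~2} to produce a centrally symmetric cap body whose illumination number inherits that covering-number lower bound. The geometric engine is the spike-cone observation: if $v$ is a spike vertex with $\|v\|=r$ and $v$ is ``isolated'' in the sense that its tangent cone in the cap body equals its spike cone, then any illumination direction $u$ for $v$ satisfies $u\cdot v < -\sqrt{r^2-1}$, so the vertices jointly illuminated by a single $u$ lie in a Euclidean ball of diameter $2$ centered at $-u\sqrt{r^2-1}$.

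Concretely, I would take $Y\subset\R^n$ from~\cite{ABP}*{Th.~2}: $|Y|=(2/\sqrt 3 - o(1))^n$, $\diam Y=1$, with covering number by unit-diameter balls at least $(2/\sqrt 3 - o(1))^n$. Scale $Y$ by $2$, isometrically embed $2Y$ into the tangent hyperplane to $r\S^{n-1}$ at $re_1$, and radially project onto $r\S^{n-1}$; for $r$ large the projection distorts Euclidean distances by $1+O(r^{-2})$, so the image $X \subset r\S^{n-1}$ still requires $(2/\sqrt 3-o(1))^n$ balls of diameter $2+o(1)$ to cover. Set $V:=X\cup(-X)$, and adjoin filler vertices on $r\S^{n-1}$ as necessary so that the spike caps $\{B(v)\}_{v\in V}$ cover $\S^{n-1}$; this makes $K:=\conv(\B^n\cup V)=\Sp[V]$ a centrally symmetric cap body. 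Since the clusters $X$ and $-X$ are separated by Euclidean distance $\approx 2r\gg 2$, no diameter-$2$ Euclidean ball covers vertices of both, and the covering number of $V$ by such balls is at least that of $X$, which is $(2/\sqrt 3-o(1))^n$; combined with the spike-cone observation this will yield $I(K)\ge(2/\sqrt 3-o(1))^n$.

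The main obstacle will be justifying the isolation assumption: at a vertex $v\in X$ the tangent cone $T_K(v)$ generally strictly contains the spike cone of $v$ by directions toward nearby cluster vertices, so a priori a direction $u$ outside $v$'s spike cone could still illuminate $v$, undermining the diameter-$2$ reduction. I plan to address this by choosing $r$ and the scaling so that $V$ is effectively $(2\arccos(1/r))$-separated on $r\S^{n-1}$---equivalent to the tangent cone at each vertex reducing to its spike cone---and by arguing that the~\cite{ABP}*{Th.~2} construction can be so arranged with only subexponential loss, preserving the $(2/\sqrt 3-o(1))^n$ lower bound on the covering number.
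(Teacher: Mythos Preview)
Your proposal has a genuine and, as written, unfixable tension. First, the ``filler'' step is backwards: a spiky ball is a cap body precisely when the \emph{open} associated caps are pairwise disjoint, so adjoining vertices until the caps cover $\S^{n-1}$ guarantees the body is \emph{not} a cap body; in that regime $\conv(\B^n\cup V)\ne\Sp[V]$ and the convex hull need not be a spiky ball at all. Second, and more seriously, your two parameter choices fight each other. You want $r$ large so that the radial projection from the tangent hyperplane distorts distances by only $1+O(r^{-2})$; but a spike at height $r$ has base cap of angular radius $\arccos(1/r)$, so the cap-body condition---equivalently, the isolation condition you need for the spike-cone observation---forces the vertices to be at angular distance at least $2\arccos(1/r)$, which tends to $\pi$ as $r\to\infty$ and so admits only a bounded number of points. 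Meanwhile your cluster $X$, being a near-isometric image of a set of Euclidean diameter $2$ on a sphere of radius $r$, has angular diameter $O(1/r)$, far below the required separation. Thus for large $r$ your $X$ can never yield a cap body, and for moderate $r$ the distortion is no longer negligible and the reduction to the Euclidean covering number of $Y$ breaks down.

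The paper avoids this entirely by working on the sphere from the outset and using a different counting mechanism. It invokes \cite{ABP}*{Lemma~2} (rather than the Euclidean Theorem~2) to obtain $X\subset\S^{n-1}$ with $|X|\ge(1/\sin(\tfrac{\pi}{3}+\eps)+o(1))^n$ such that (i) any two points of $X$ are at angular distance in $[\tfrac{\pi}{3},\tfrac{2\pi}{3}]$, and (ii) every point of $\S^{n-1}$ lies in at most $O(n\log n)$ caps $C[x,\tfrac{\pi}{3}+\eps]$, $x\in X$. Symmetrizing to $Y=X\cup(-X)$ preserves (ii) and the lower separation bound $\tfrac{\pi}{3}$. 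Taking $r=\tfrac{2}{\sqrt{3}}$ makes each spike cap have angular radius exactly $\tfrac{\pi}{6}$, so the $\tfrac{\pi}{3}$-separation gives a bona fide centrally symmetric cap body and isolation at every vertex is automatic. The count is then immediate from (ii): a direction $u$ illuminates vertex $\tfrac{2}{\sqrt{3}}x_i$ iff $-u\in C(x_i,\tfrac{\pi}{3})$, so each direction illuminates at most $O(n\log n)$ vertices and $I(K)\ge |Y|/O(n\log n)$. No tangent-plane embedding, no fillers, and no reduction to a Euclidean covering number are needed.
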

	As for $G_n$ and $I_{s,n}$, it is compelling to find the asymptotic behaviour of $I_{c,n}$ and $I_{c,n}^{(s)}$, or to get a better exponential bounds and narrow the gap between $\frac{2}{\sqrt{3}}\approx1.1547$ and $1.19852$.
	
	We prove \cref{thm:gal-upper} in \cref{sec:Gallai},  \cref{thm:cap-bodies-ill} and \cref{cor:cap-bodies} in \cref{sec:Illum}, and \cref{thm:lower-cap} in \cref{sec:IllumLB}.

\section{Upper bound on Gallai's number $G_n$}\label{sec:Gallai}


Throughout this section we assume $n\ge 2$.

\begin{lemma}
	\label{lem:sphere-cap-overlap}
	If $\B^n$ and $r\B^n+x$ intersect, where $r\ge2$, then $(2\S^{n-1})\cap (r\B^n+x)$ contains a closed spherical cap $2C[y,\alpha]$ of angular radius  $\alpha\geq \arccos\tfrac{2r+5}{4(r+1)}$.
\end{lemma}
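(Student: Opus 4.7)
The plan is to choose the center $y$ of the cap by symmetry along the direction of $x$ and then reduce everything to a one-variable calculus problem in $t:=\|x\|$.

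First I handle the trivial case $x=\zero$: since $r\ge 2$, the ball $r\B^n$ contains $2\S^{n-1}$, so the whole sphere works and the bound is vacuous. Henceforth assume $x\neq \zero$ and set $y:=x/\|x\|\in\S^{n-1}$. For any $z\in\S^{n-1}$ we expand
\[
\|2z-x\|^2 = 4 - 4(x\cdot z) + \|x\|^2 = 4 - 4\|x\|(y\cdot z) + \|x\|^2.
\]
Define $\alpha\in(0,\pi]$ by
\[
\cos\alpha = \frac{4+\|x\|^2-r^2}{4\|x\|}
\]
(and take $\alpha=\pi$ if the right-hand side is $\le -1$). Then the condition $y\cdot z\ge\cos\alpha$ is equivalent to $\|2z-x\|^2\le r^2$, i.e.\ $2z\in r\B^n+x$. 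Hence $2C[y,\alpha]\subset(2\S^{n-1})\cap(r\B^n+x)$, which realizes the cap required by the lemma.

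It remains to show $\alpha\ge\arccos\tfrac{2r+5}{4(r+1)}$, i.e.\ to bound $\cos\alpha$ from above. Set
\[
f(t):=\frac{4+t^2-r^2}{4t},\qquad t\in(0,r+1],
\]
where the upper endpoint comes from the intersection hypothesis $\|x\|\le r+1$. A direct differentiation gives $f'(t)=(t^2+r^2-4)/(4t^2)$, which is nonnegative for all $t>0$ since $r\ge 2$. Therefore $f$ is nondecreasing on $(0,r+1]$, and
\[
\cos\alpha=f(\|x\|)\le f(r+1)=\frac{4+(r+1)^2-r^2}{4(r+1)}=\frac{2r+5}{4(r+1)},
\]
which yields $\alpha\ge\arccos\tfrac{2r+5}{4(r+1)}$ as claimed.

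There is no real obstacle here; the only subtlety is to notice that the hypothesis $r\ge 2$ is exactly what makes $f$ monotone on the entire interval, so that the worst case is the extreme configuration where the two balls are (nearly) externally tangent, $\|x\|=r+1$.
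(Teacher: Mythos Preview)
Your proof is correct and follows essentially the same approach as the paper's: both reduce to the extremal configuration $\|x\|=r+1$ and compute the cap there. The paper argues geometrically that pushing $r\B^n+x$ outward until it touches $\B^n$ can only shrink the intersection with $2\S^{n-1}$, whereas you make this monotonicity precise via the derivative computation $f'(t)=(t^2+r^2-4)/(4t^2)\ge 0$; this is exactly the analytic content of the paper's geometric claim.
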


\begin{proof}
	If $x=0$, then the conclusion trivially holds. Otherwise, we can move the ball $r\B^n+x$ away from the origin until it touches $\B^n$, namely, we replace $x$ with $(r+1)\tfrac{x}{\|x\|}$. With this change, the intersection $(2\S^{n-1})\cap (r\B^n+x)$ only gets smaller. Then we have $(2\S^{n-1})\cap (r\B^n+x)=2C[\tfrac{x}{\|x\|},\alpha]$, where $\alpha$ satisfies
	$
	(2)^2+(r+1)^2-4(r+1)\cos\alpha=r^2,
	$
	leading to the required bound.
\end{proof}

\begin{lemma}
	\label{lem:small-ball-cover}
	If $\sqrt{1-\tfrac1n}\le \tfrac{r_1}{r_2}\le 1$, $r_1,r_2>0$, then any ball of radius $r_2$ in $\E^n$ can be covered by $2n$ balls of radius $r_1$. 
\end{lemma}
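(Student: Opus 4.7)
The plan is to place the $2n$ covering balls in a highly symmetric configuration, centered on the coordinate semi-axes, and to use the standard coordinate-max bound $\max_i |x_i| \ge \|x\|/\sqrt n$ to control the worst-case distance. First I would translate so that the ball of radius $r_2$ is $r_2\B^n$. I then place the $2n$ balls of radius $r_1$ with centers
$$
c_{i,\pm} := \pm\,\tfrac{r_2}{\sqrt n}\,\e_i, \qquad i=1,\dots,n,
$$
where $\e_1,\dots,\e_n$ is the standard orthonormal basis of $\E^n$ (the choice $r_2/\sqrt n$ will be dictated by the calculation below, balancing the ``center'' and ``boundary'' cases of $\|x\|\in[0,r_2]$).

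Next, for an arbitrary $x\in r_2\B^n$ I choose an index $k$ with $|x_k|=\max_i|x_i|$, let $\eps=\mathrm{sign}(x_k)$, and claim that $x\in r_1\B^n+c_{k,\eps}$. Expanding the squared distance,
$$
\|x-c_{k,\eps}\|^2 = \|x\|^2 - 2|x_k|\tfrac{r_2}{\sqrt n} + \tfrac{r_2^2}{n},
$$
and using the elementary inequality $|x_k|\ge\|x\|/\sqrt n$, I obtain the bound
$$
\|x-c_{k,\eps}\|^2 \le \|x\|^2 - \tfrac{2r_2}{n}\|x\| + \tfrac{r_2^2}{n}.
$$

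Finally I would observe that, viewed as a quadratic in $s=\|x\|\in[0,r_2]$, the right-hand side is a convex parabola, hence attains its maximum on the interval at one of the endpoints. A direct check gives $r_2^2/n$ at $s=0$ and $r_2^2(1-1/n)$ at $s=r_2$, with the latter being larger for $n\ge 2$. Since by hypothesis $r_1^2 \ge r_2^2(1-1/n)$, this yields $\|x-c_{k,\eps}\|\le r_1$, which proves the covering property. There is no real obstacle here; the only subtle point is choosing the axial distance $r_2/\sqrt n$ correctly, and this is exactly the value that balances the two endpoint values of the quadratic (equivalently, it is the value that makes the ``polar cap'' cut out on $r_2\S^{n-1}$ by the covering ball of radius $r_2\sqrt{1-1/n}$ reach height $r_2/\sqrt n$, matching the worst-case maximum coordinate of a unit vector).
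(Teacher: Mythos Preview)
Your proof is correct and is essentially the same as the paper's: the paper normalizes to $r_2=1$, places the centers at $\pm\tfrac{1}{\sqrt n}\e_i$, and performs exactly the same squared-distance computation and bound via $|x_k|\ge\|x\|/\sqrt n$. The only cosmetic difference is that you keep general $r_2$ and make the convex-parabola endpoint argument explicit, whereas the paper simply writes the final inequality $\|x\|^2-\tfrac{2}{n}\|x\|+\tfrac1n\le 1-\tfrac1n$.
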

\begin{proof}
    Applying the appropriate homothety, we can assume that $r_2=1$. Place the centers of the smaller balls at the $2n$ points, whose one coordinate is $\pm\frac1{\sqrt{n}}$ and the other coordinates are zeroes. Let $x=(x_1,\dots,x_n)\in\B^n$. By symmetry, we can assume that $|x_1|=\max_{i \in \{1, \ldots, n\}} |x_i|$ and $x_1>0$, which gives
    \[
    \|x-(\tfrac1{\sqrt{n}},0,\dots,0)\|^2 = \|x\|^2-2x_1\frac1{\sqrt{n}}+\frac1n
    \le \|x\|^2-\frac2{n}\|x\|+\frac1n\le 1-\frac1n,
    \]
    implying the required covering property.
\end{proof}

\begin{proof}[Proof of \cref{thm:gal-upper}.]
	Let $\F$ be the given family of balls. We can assume that the smallest ball in $\F$ is $\B^n$. For an interval $I\subset [1,\infty)$, let $\F_I$ denote the sub-collection of $\F$ consisting of all balls with radius in $I$. 
 
    We first pierce all balls from $\F_{[n,\infty)}$. Since $\arccos\tfrac{2n+5}{4(n+1)}=\tfrac\pi3-O(\tfrac1n)$, by~(\ref{eq:BW}), there exists a cover of the sphere $2\S^{n-1}$ by $a_n \le (\tfrac{2}{\sqrt{3}}+o(1))^n$ closed caps of angular radius $\arccos\tfrac{2n+5}{4(n+1)}$. Let $C_0$ be centers of these caps. By~\cref{lem:sphere-cap-overlap}, any $r\B^{n}+x$ with $r\geq n$ contains a cap of angular radius at least $\arccos\tfrac{2n+5}{4(n+1)}$, and such cap contains at least one point of $C_0$. So $C_0$ pierces the set $\F_{[n,\infty)}$.
	
    Now, denote $\lambda:=(1-\tfrac{1}{n})^{-1/2}$, then $\lambda>1$ and let $t$ be the smallest integer with $\lambda^t>n$. For each $k$, $1\leq k \leq t$, consider the set $X_k$ of the centers of the balls from $\F_{[\lambda^{k-1},\lambda^k)}$. Since any two balls in $\F_{[\lambda^{k-1},\lambda^k)}$ intersect, the diameter of $X_k$ is at most $2\lambda^k$. By~\cite{Bo-Li}, any set of diameter $d$ in $\R^{n}$ can be covered by at most $(\sqrt{3/2}+o(1))^n$ balls of diameter at most $d$, so it is possible to cover $X_k$ by at most $b_n\le (\sqrt{3/2}+o(1))^n$ balls of radius $\lambda^k$. Now, by \cref{lem:small-ball-cover}, every ball of radius $\lambda^{k}$ can be covered by $2n$ balls of radius $\lambda^{k-1}$. So $X_k$ can be covered by $2nb_n$ balls of radius $\lambda^{k-1}$ with centers of these balls forming a set $C_k$. So every ball in $\F_{[\lambda^{k-1},\lambda^k)}$ will contain at least one point of $C_k$, so $C_k$ pierces $\F_{[\lambda^{k-1},\lambda^k)}$.
	
	All together, for $\F=\F_{[1,\infty)}$ we constructed a piercing set $C_0\cup C_1\cup \ldots C_{t}$ of cardinality $a_n+2ntb_n$, which, by $t\le (2+o(1))n \log n$, gives the desired bound. 
\end{proof}

\section{Upper bounds on illumination of cap bodies}\label{sec:Illum}

A spiky ball $\Sp[x_1,\dots,x_m]$ is a cap body if and only if the open caps associated with $x_i$ are mutually disjoint. The following proposition, which we will only use for the cap bodies, is straightforward and can also be found in~\cite{Be-cap}*{Lemma 10 (b)}. Here the positive hull of vectors $\{y_i\}_{i=1}^{k}$ is $\{\sum_{i=1}^{k} \lambda_i y_{i} \;:\; \lambda_i\in(0,\infty)\}$.
\begin{proposition}\label{prop:illum}
	A spiky ball\ \ $\Sp[x_1,\dots,x_m]$ is illuminated by directions $\{y_j\}_{j=1}^k\subset\S^{n-1}$ provided the positive hull of $\{y_j\}_{j=1}^k$ is $\E^{n}$ and $C(-\tfrac{x_i}{\|x_i\|},\tfrac\pi2-\arccos\tfrac{1}{\|x_i\|})\cap \{y_j\}_{j=1}^k\ne\emptyset$ for each $i$, $1\le i\le m$.
\end{proposition}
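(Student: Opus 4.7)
The plan is to stratify the boundary of the spiky ball $K:=\Sp[x_1,\dots,x_m]$ into three types of points and show that each type is illuminated by some $y_j$: (i) points $p\in\S^{n-1}$ not lying in any open base cap $C(x_i/\|x_i\|,\arccos(1/\|x_i\|))$; (ii) points on the lateral surface of the tangent cone from some vertex $x_i$ to $\B^n$; and (iii) the vertices $x_i$ themselves. Since $K$ is the union of spikes $\conv(\B^n\cup\{x_i\})$, any boundary point of $K$ lies on the boundary of one such spike and outside the interior of every other, so the guiding principle is that $y$ illuminates $p$ as soon as $y$ points strictly into the interior of whatever convex piece of $K$ (either $\B^n$ or a single spike) locally certifies $p$ as a boundary point.

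For type (i) the relevant convex piece is $\B^n$, whose outward unit normal at $p$ is $p$ itself, so I just need some $y_j$ with $y_j\cdot p<0$. This is exactly what the positive hull hypothesis yields: if every $y_j$ satisfied $y_j\cdot p\ge 0$, then $\{y_j\}$ would be contained in a closed half-space and its positive hull would be a proper convex cone, contradicting the hypothesis. For types (ii) and (iii), set $u_i:=x_i/\|x_i\|$, $r_i:=\|x_i\|$, $\beta_i:=\arccos(1/r_i)$. The tangency condition $t\cdot x_i=1$ for a tangent point $t\in\S^{n-1}$ of a generator from $x_i$ to $\B^n$ gives $u_i\cdot t=\cos\beta_i$, and a short calculation shows that the segment $[x_i,t]$ makes the angle $\pi/2-\beta_i$ with the inward ray $-u_i$. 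Consequently the interior of the tangent cone at the apex consists of directions in $C(-u_i,\pi/2-\beta_i)$, so $x_i$ is illuminated precisely by directions in that cap, which is exactly the cap in the hypothesis.

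It remains to verify that every $y\in C(-u_i,\pi/2-\beta_i)$ also illuminates any point $p$ on the lateral cone surface of the $i$-th spike. Since the tangent plane to $\B^n$ at the corresponding tangent point $t$ contains the whole generator $[x_i,t]$, the outward unit normal to the spike at $p$ coincides with $t$, and the task reduces to showing $y\cdot t<0$ for every $t\in\S^{n-1}$ with $u_i\cdot t=\cos\beta_i$. Writing $t=\cos\beta_i\,u_i+\sin\beta_i\,v$ with $v\in u_i^\perp$, $\|v\|=1$, and combining $y\cdot u_i<-\sin\beta_i$ with $y\cdot v\le\sqrt{1-(y\cdot u_i)^2}<\cos\beta_i$ yields the desired strict inequality. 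I expect this last spherical-trigonometry estimate to be the only substantive computation; everything else is a matter of identifying the three pieces of $\partial K$ and matching each with the correct hypothesis.
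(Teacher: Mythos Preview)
Your argument is correct. The paper itself does not supply a proof of this proposition; it simply remarks that the statement is straightforward and refers to \cite{Be-cap}*{Lemma 10(b)}. Your stratification of $\partial K$ into spherical points outside every open base cap, lateral cone points, and vertices, together with the observation that it suffices to push each boundary point into the interior of a single spike (or of $\B^n$), is precisely the natural verification. The only computation, namely that $y\in C(-u_i,\tfrac\pi2-\beta_i)$ forces $y\cdot t<0$ for every tangent point $t$ with $t\cdot u_i=\cos\beta_i$, is carried out correctly: from $y\cdot u_i<-\sin\beta_i$ one gets both $\cos\beta_i\,(y\cdot u_i)<-\sin\beta_i\cos\beta_i$ and $|y\cdot v|\le\sqrt{1-(y\cdot u_i)^2}<\cos\beta_i$, and summing gives the strict inequality. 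One small point worth making explicit is why every boundary point of $K$ falls into one of your three types: since $K$ is star-shaped about the origin with radial function $\rho=\max_i\rho_i$, any $p\in\partial K$ lies on $\partial(\text{spike}_i)$ for the $i$ attaining the maximum, and hence is either on $\S^{n-1}$ (and then necessarily outside each open base cap, else it would lie in the interior of the corresponding spike) or on the lateral cone of that spike. With that sentence added, the write-up is complete.
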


Our approach to illumination of cap bodies is the following. The vertices which are ``close'' to the sphere $\S^{n-1}$ can be illuminated by a system of directions which are centers of the corresponding covering system of caps (independent of the vertices). The vertices which are ``far'' from the sphere can be illuminated using one direction per each vertex. As the latter vertices must be separated due to convexity, their number can be estimated using bounds on $M(n,\theta)$. 
 
\begin{proof}[Proof of \cref{thm:cap-bodies-ill}.]
	Let $K=\Sp[x_1,\dots,x_m]$ be a cap body. We can assume
	\[
	\|x_1\|\ge \|x_2\|\ge \dots\ge \|x_k\|\ge \frac1{\cos\alpha}>\|x_{k+1}\|\ge \dots\ge \|x_m\|.
	\]	
	As $K$ is convex, $C(\tfrac{x_i}{\|x_i\|},\arccos\tfrac1{\|x_i\|})$, $1\le i\le k$, are mutually disjoint. Hence, any two points from $U_1:=\{-\tfrac{x_i}{\|x_i\|}\}_{i=1}^k$ are at least $2\alpha$ apart. Thus, $|U_1|\le M(n,2\alpha)$, and $U_1\cap C(-\tfrac{x_i}{\|x_i\|}, \frac{\pi}{2}-\arccos\tfrac1{\|x_i\|})\neq \emptyset$ for all $i\in \{1,\ldots k\}$.
 
    Let $U_2$ be a set of $N(n,\tfrac\pi2-\alpha)$ centers of caps of angular radius $\tfrac\pi2-\alpha$ covering the whole sphere. Since for all $i\in \{k+1,\ldots, m\}$ we have $C(-\tfrac{x_i}{\|x_i\|}, \frac{\pi}{2}-\alpha)\subseteq C(-\tfrac{x_i}{\|x_i\|}, \frac{\pi}{2}-\arccos\tfrac1{\|x_i\|})$, we also have $U_2\cap C(-\tfrac{x_i}{\|x_i\|}, \frac{\pi}{2}-\arccos\tfrac1{\|x_i\|})\neq \emptyset$ for $i\in \{k+1,\ldots, m\}$. Moreover, the positive hull of $U_2$ is $\E^{n}$. 
    
    By \cref{prop:illum}, $U_1\cup U_2$ illuminates $K$, so $I(K)\le M(n,2\alpha)+N(n,\tfrac\pi2-\alpha)$. 
\end{proof}

\begin{proof}[Proof of~\cref{cor:cap-bodies}.]
	The proof is based on Theorem~\ref{thm:cap-bodies-ill}, we only need to minimize $M(n,2\alpha)+N(n,\pi/2-\alpha)$, where $\theta\in(0,\pi/2)$.	
	
	We use an asymptotic upper bound on $M(n,\theta)$ due to Kabatjanskii and Levenstein~\cite{KL}. Namely, for fixed $0<\theta<\tfrac\pi2$ and $n\to\infty$, with
	\[
	KL(\theta):=\frac{1+\sin \theta}{2\sin\theta}\log_2 \frac{1+\sin \theta}{2\sin\theta}-\frac{1-\sin \theta}{2\sin\theta}\log_2 \frac{1-\sin \theta}{2\sin\theta},
	\]
	one has
	\begin{equation}\label{eqn:KLbound}
		\frac{1}{n}\log_2 M(n,\theta) \lesssim KL(\theta),
	\end{equation}
	where for positive $f(n),g(n)$, we write $f\lesssim g$ if $f(n)\le g(n)+o(1)$ as $n\to\infty$.

	With fixed $\theta$ and large $n$, by~(\ref{eq:BW}), 
	\begin{equation}\label{eqn:sphere-covering-bound}
		\frac{1}{n}\log_2 N(n,\pi/2 -\theta)\lesssim -\log_2\cos\theta.
	\end{equation}
	
    The functions $f(x)=KL(2x)$, $x\in(0,\pi/4)$, and $g(x)=-\log_2\cos x$, $x\in(0,\pi/4)$, are strictly decreasing/increasing, respectively, with $f(0+)-g(0+)=\infty$ and $f(\pi/4-)-g(\pi/4-)=-\frac{1}{2}$. Thus $f(x)=g(x)$, and equivalently \eqref{eqn:alpha}, has a unique solution $\alpha$. So by \cref{thm:cap-bodies-ill}, \eqref{eqn:KLbound} and~\eqref{eqn:sphere-covering-bound}, we obtain $I(K)\le (\tfrac1{\cos\alpha}+o(1))^n$. The numeric approximation for $\alpha$ can be obtained by use of computer. With $x=0.583808$, we computationally have $\max\{f(x),g(x)\}<1.19851$. 
\end{proof}

\begin{remark}
    The bound~\eqref{eqn:KLbound} can be improved for the range $\theta<\theta^*\approx 63^\circ$ using a connection between sizes of spherical codes for different angles, see~\cite{KL}*{Cor.~1}. Our optimal value of $\theta=2\alpha\approx 66.9^\circ$ is sufficiently far from this range so that using~\cite{KL}*{Cor.~1} instead of~\eqref{eqn:KLbound} does not improve the bound in \cref{cor:cap-bodies}.
\end{remark}

\section{Lower bound on $I_{c,n}^{(s)}$}\label{sec:IllumLB}

\begin{proof}[Proof of \cref{thm:lower-cap}.]
For any $\eps\in(0,\pi/6)$, by~\cite{ABP}*{Lemma~2} with $\phi=\pi/3+\eps$ and $\psi=\pi/3$, there exists a finite set $X\subset \S^{n-1}$ of cardinality $\ge \left(\frac{1+o(1)}{\sin(\pi/3+\eps)}\right)^n$ satisfying: (a) the angular distance between any two distinct points of $X$ is between $\pi/3$ and $2\pi/3$, and (b) every point of $\S^{n-1}$ is contained in at most $O(n\log n)$ spherical caps $C[x,\pi/3+\eps]$, $x\in X$. 
 
We now can take $Y=X\cap-X$ which still satisfies (b), and additionally satisfies (c): the angular distance between any two distinct points of $X$ is at least $\pi/3$. Now if $Y=\{x_i\}_{i=1}^m$, define $K:=\Sp[\tfrac{2}{\sqrt{3}}x_1,\dots,\tfrac{2}{\sqrt{3}}x_m]$. The cap associated with $\tfrac{2}{\sqrt{3}}x_i$ is then $C[x_i,\pi/6]$, so by (c) the spiky ball $K$ is a centrally symmetric cap body. A direction $u\in\S^{n-1}$ illuminates precisely those $\tfrac{2}{\sqrt{3}}x_i$ for which $-u\in C(x_i,\pi/3)$. Therefore, by (b), each direction illuminates at most $O(n\log n)$ vertices, and we obtain $I(K)\ge \left(\frac{1+o(1)}{\sin(\pi/3+\eps)}\right)^n/O(n\log n)$. Taking $\eps>0$ arbitrarily small completes the proof.
\end{proof}

\begin{bibsection}
	\begin{biblist}
		
		\bib{ABP}{article}{
			author={Arman, A.},
			author={Bondarenko, A.},
			author={Prymak, A.},
			title={Convex bodies of constant width with exponential illumination number},
			journal={Discrete Comput. Geom.},
			pages={accepted on Apr.~3, 2024},
			eprint={https://arxiv.org/abs/2304.10418}	
		}

\bib{ABP-small-dim}{article}{
	author={Arman, A$ $.},
	author={Bondarenko, A.},
	author={Prymak, A.},
	title={On Hadwiger's covering problem in small dimensions},
	eprint={https://arxiv.org/abs/2404.00547}	
}

\bib{Be-cap}{article}{
	author={Bezdek, K\'{a}roly},
	author={Ivanov, Ilya},
	author={Strachan, Cameron},
	title={Illuminating spiky balls and cap bodies},
	journal={Discrete Math.},
	volume={346},
	date={2023},
	number={1},
	pages={Paper No. 113135, 12},
}		

\bib{Be-Kh}{article}{
	author={Bezdek, K\'{a}roly},
	author={Khan, Muhammad A.},
	title={The geometry of homothetic covering and illumination},
	conference={
		title={Discrete geometry and symmetry},
	},
	book={
		series={Springer Proc. Math. Stat.},
		volume={234},
		publisher={Springer, Cham},
	},
	date={2018},
	pages={1--30},
}


\bib{Bo}{article}{
	author={Boltyanski, V.},
	title={The problem of illuminating the boundary of a convex body},
	journal={Izv. Mold. Fil. AN SSSR},
	volume={76},
	date={1960},
	pages={77--84}
}

		\bib{Bo-Li}{article}{
			author={Bourgain, J.},
			author={Lindenstrauss, J.},
			title={On covering a set in ${\bf R}^N$ by balls of the same diameter},
			conference={
				title={Geometric aspects of functional analysis (1989--90)},
			},
			book={
				series={Lecture Notes in Math.},
				volume={1469},
				publisher={Springer, Berlin},
			},
			isbn={3-540-54024-5},
			date={1991},
			pages={138--144},
		}

		\bib{Bo-Wi}{article}{
			author={B\"{o}r\"{o}czky, K\'{a}roly, Jr.},
			author={Wintsche, Gergely},
			title={Covering the sphere by equal spherical balls},
			conference={
				title={Discrete and computational geometry},
			},
			book={
				series={Algorithms Combin.},
				volume={25},
				publisher={Springer, Berlin},
			},
			isbn={3-540-00371-1},
			date={2003},
			pages={235--251},
		}

\bib{CHMT}{article}{
	author={Marcelo Campos},
	author={Peter van Hintum},
	author={Robert Morris},
	author={Marius Tiba},
	title={Towards Hadwiger's conjecture via Bourgain slicing},
	journal={International Mathematics Research Notices},
	date={9 September 2023},
}

	\bib{Da86}{article}{ 
		author={Danzer, L.},
		title={Zur L\"{o}sung des Gallaischen Problems \"{u}ber Kreisscheiben in
			der Euklidischen Ebene},
		language={German, with English summary},
		journal={Studia Sci. Math. Hungar.},
		volume={21},
		date={1986},
		number={1-2},
		pages={111--134},
	}	

%
%
\bib{Ec}{article}{
	author={Eckhoff, J\"{u}rgen},
	title={A survey of the Hadwiger-Debrunner $(p,q)$-problem},
	conference={
		title={Discrete and computational geometry},
	},
	book={
		series={Algorithms Combin.},
		volume={25},
		publisher={Springer, Berlin},
	},
	isbn={3-540-00371-1},
	date={2003},
	pages={347--377},
}

\bib{Grun}{article}{
   author={Gr\"unbaum, Branko},
   title={On intersections of similar sets},
   journal={Portugal. Math.},
   volume={18},
   date={1959},
   pages={155--164},
   issn={0032-5155},
   review={\MR{0125491}},
}

\bib{Ha}{article}{
	author={Hadwiger, H.},
	title={Ungel\"{o}stes Probleme Nr. 20},
	language={German},
	journal={Elem. Math.},
	volume={12},
	date={1957},
	pages={121}
}

\bib{KL}{article}{
	author={Kabatjanskii, G. A.},
	author={Levenstein, V. I.},
	title={Bounds for packings on the sphere and in space},
	language={Russian},
	journal={Problemy Peredaci Informacii},
	volume={14},
	date={1978},
	number={1},
	pages={3--25},
}

%

\bib{Na}{article}{
	author={Nasz\'{o}di, M\'{a}rton},
	title={A spiky ball},
	journal={Mathematika},
	volume={62},
	date={2016},
	number={2},
	pages={630--636},
}

%

\bib{Pr}{article}{
	author={Prymak, Andriy},
	title={A new bound for Hadwiger's covering problem in $\Bbb E^3$},
	journal={SIAM J. Discrete Math.},
	volume={37},
	date={2023},
	number={1},
	pages={17--24},
}

\bib{Pr-Sh}{article}{
	author={Prymak, A.},
	author={Shepelska, V.},
	title={On the Hadwiger covering problem in low dimensions},
	journal={J. Geom.},
	volume={111},
	date={2020},
	number={3},
	pages={Paper No.42, 11},
}

\bib{Ro}{article}{
	author={Rogers, C. A.},
	title={Covering a sphere with spheres},
	journal={Mathematika},
	volume={10},
	date={1963},
	pages={157--164},
}

%
%
%
%
		
	\end{biblist}
\end{bibsection}
\end{document}